\documentclass[11pt]{article}

\usepackage[margin=1in]{geometry}
\usepackage{amsmath,amssymb,amsthm}
\usepackage{microtype}
\usepackage[colorlinks=true,citecolor=blue,urlcolor=blue]{hyperref}

\newtheorem{theorem}{Theorem}
\newtheorem{lemma}[theorem]{Lemma}
\newtheorem{corollary}[theorem]{Corollary}
\newcommand{\Z}{\mathbb Z}
\newcommand{\Rcal}{\mathcal R}
\newcommand{\diamaff}{\operatorname{diam}}

\title{A Local Classification of Four-Element Multiple Sumsets}
\author{Minkyu Jung\\
Independent Researcher, Republic of Korea\\
\texttt{minkyu.jung3@gmail.com}}
\date{21 July 2026}

\begin{document}
\maketitle

\begin{abstract}
For a finite set \(A\subset\Z\), write \(hA\) for its \(h\)-fold
sumset, and let
\[
  \Rcal(h,k)=\{|hA|:A\subset\Z,\ |A|=k\}.
\]
We determine the part of \(\Rcal(h,4)\) lying between \(4h+2\) and
\(6h-4\): for \(h=4\) the only value is \(5h-1\), while for \(h\ge5\)
the only values are \(5h-1\) and \(5h+1\).  This proves Rajagopal's
conjectured gap \(5h\notin\Rcal(h,4)\) for every \(h\ge4\).
For \(h\ge6\), it also yields the new missing interval
\([5h+2,6h-4]\), which lies outside Rajagopal's general excluded set.
Lev's lower bound for the successive growth of multiple sumsets reduces the
problem to normalized sets of affine diameter five, of which there are only
six.  Reflection and four elementary exact sumset computations finish the
classification.
\end{abstract}

\noindent\textbf{Keywords.}
Sumset size, iterated sumset, affine diameter, additive combinatorics.

\smallskip
\noindent\textbf{2020 Mathematics Subject Classification.}
11B13, 11P70.

\section{Introduction}

For an integer \(h\ge1\) and a finite set \(A\subset\Z\), define
\[
 hA=\{a_1+\cdots+a_h:a_1,\ldots,a_h\in A\}.
\]
Nathanson introduced the set \(\Rcal(h,k)\) of possible cardinalities
\(|hA|\) with \(|A|=k\) and posed the problem of determining it
\cite{Nathanson}.  Tang and Xing \cite{TangXing}, Schinina
\cite{Schinina}, and Rajagopal \cite{Rajagopal} established general
intervals missing from these ranges.  In particular, for \(h\ge4\),
Rajagopal's theorem gives
\[
 \Rcal(h,4)\cap[4h+2,5h-2]=\varnothing.
\]
Rajagopal then recorded in Section~5.2 the further conjectural gap,
supported by computation,
\[
 5h\notin\Rcal(h,4)\qquad(h\ge4).
\]

We prove more: in the larger interval \([4h+2,6h-4]\), we determine every
possible value.  The argument combines Lev's multiple-addition estimate
with a complete analysis at a single affine diameter.  It is short, but one
point is essential: the diameter in Lev's theorem is invariant under
nonzero integral dilations.  It is therefore the affine diameter, rather
than merely \(\max A-\min A\).
In particular, besides resolving the conjectured point \(5h\), the
classification rules out every integer in \([5h+2,6h-4]\) for \(h\ge6\);
these integers lie outside the general excluded set
\(\Delta_{h,4}\) from \cite{Rajagopal}.

\begin{theorem}\label{thm:main}
For every integer \(h\ge4\),
\[
 \Rcal(h,4)\cap[4h+2,6h-4]
 =
 \begin{cases}
  \{5h-1\},&h=4,\\
  \{5h-1,5h+1\},&h\ge5.
 \end{cases}
\]
\end{theorem}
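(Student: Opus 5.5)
The plan is to split according to the affine diameter \(\ell\) of \(A\), using Lev's multiple-addition estimate to eliminate every \(\ell\ne5\), and then to compute the finitely many cases with \(\ell=5\) by hand.

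\emph{Normalization.} Since \(|hA|\) is unchanged by translations and nonzero integral dilations, we may assume \(A=\{0,a,b,\ell\}\) with \(0<a<b<\ell\) and \(\gcd(a,b,\ell)=1\). Among all such normalized affine images we take one with \(\ell\) minimal; this minimal value is the affine diameter. We will use Lev's estimate in the form
\[
|jA|-|(j-1)A|\ \ge\ \min\{\ell,\ j(k-2)+1\},
\]
valid for normalized \(A\) with \(|A|=k\). I expect the main subtlety to be confirming that this is the correct form of the estimate. The point stressed in the introduction is that \(\ell\) here is the diameter after removing the gcd, so it is invariant under dilation and equals the affine diameter in our normalization.

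\emph{Eliminating \(\ell\ne5\).} Take \(k=4\) and sum the increments from \(j=2\) to \(j=h\).
\begin{itemize}
\item If \(\ell\ge6\), the \(j=2\) increment is at least \(\min\{\ell,5\}=5\). For every \(j\ge3\) we have \(2j+1\ge7\), so each such increment is at least \(6\). Hence
\[
|hA|\ \ge\ 4+5+6(h-2)\ =\ 6h-3,
\]
which lies above the interval \([4h+2,6h-4]\).
\item If \(\ell\le4\), then \(hA\subseteq[0,h\ell]\), so \(|hA|\le4h+1\), which lies below the interval.
\end{itemize}
Thus only \(\ell=5\) can contribute. Every \(\{0,a,b,5\}\) with \(\{a,b\}\subset\{1,2,3,4\}\) automatically has gcd \(1\), so there are exactly six sets. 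The reflection \(x\mapsto5-x\) preserves \(|hA|\) and pairs \(\{1,2\}\leftrightarrow\{3,4\}\) and \(\{1,3\}\leftrightarrow\{2,4\}\), while fixing \(\{1,4\}\) and \(\{2,3\}\). This leaves four sets to compute: \(\{0,1,2,5\}\), \(\{0,1,3,5\}\), \(\{0,1,4,5\}\) and \(\{0,2,3,5\}\).

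\emph{Exact computations.} For each of the four sets, write an element of \(hA\) as \(5t+s\), where \(t\) is the number of summands equal to \(5\) and \(s\) lies in the \((h-t)\)-fold sumset of the three smaller elements. We then locate the integers of \([0,5h]\) that are missed; they should all lie near the two ends.
\begin{itemize}
\item For \(\{0,1,2,5\}\), the value \(s\) ranges over all of \([0,2(h-t)]\). One checks that only \(5h-1\) and \(5h-2\) are missed, since each would require an \(s\) exceeding \(2(h-t)\). Hence \(|hA|=5h-1\).
\item For \(\{0,1,4,5\}\), the sumset contains every multiple of \(4\) and of \(5\) in a mixed pattern, and should be nearly full. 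The three remaining sets need the same kind of care.
\end{itemize}
For each set we will determine the few exceptional integers (among \(2,3\) at the bottom and their reflections \(5h-2,5h-3\) at the top), with thresholds in \(h\) at which they become representable. The expected outcome is as follows:
\begin{itemize}
\item some sets give \(5h-1\);
\item some give \(5h+1\), but only once \(h\ge5\), which explains the special case \(h=4\);
\item the rest give \(5h+1\) exactly or fall outside the window.
\end{itemize}
In no case do we expect the value \(5h\), nor anything in \([5h+2,6h-4]\).

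\emph{Main obstacle.} The hard part is this bookkeeping: getting each missing-element count right uniformly in \(h\), including the small-\(h\) boundary behaviour that separates \(h=4\) from \(h\ge5\). We will verify the threshold cases \(h=4,5\) directly by listing sums.
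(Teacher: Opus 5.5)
\textbf{Verdict: genuine gap.} Your reduction step is correct and is the paper's. For \(\ell\ge6\), Lev's increments give \(|hA|\ge 4+5+6(h-2)=6h-3\). For \(\ell\le4\), one has \(|hA|\le 4h+1\). Reflection about \(5\) then leaves four representatives; choosing \(\{0,1,3,5\}\) instead of its reflection \(\{0,2,4,5\}\) is immaterial. The gap is that the theorem lives almost entirely in the exact computations, and you carry out only one of the four, for \(\{0,1,2,5\}\).

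For \(\{0,1,3,5\}\) and \(\{0,2,3,5\}\) you give no computation, so the inclusion \(\subseteq\) is unproved. Nothing you have written rules out \(|hA|=5h\) for these sets, which is exactly Rajagopal's conjecture, nor a value in \([5h+2,6h-4]\). The sentence \emph{in no case do we expect the value \(5h\)} is a hope, not an argument. For \(\{0,1,4,5\}\) you only say the sumset \emph{should be nearly full}. So the inclusion \(\supseteq\) is also incomplete, because attainment of \(5h+1\) for \(h\ge5\) is never shown.

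The predictions you do make are partly wrong, so the planned bookkeeping aims at the wrong targets.
\begin{itemize}
\item \emph{No threshold between \(h=4\) and \(h=5\).} Since \(\{0,1,4,5\}=\{0,1\}+\{0,4\}\), one has \(h\{0,1,4,5\}=\bigcup_{j=0}^{h}[4j,4j+h]=[0,5h]\) for every \(h\ge3\). The case \(h=4\) is special only because \(5h+1=21\) exceeds the window's endpoint \(6h-4=20\). It is not because some set starts giving \(5h+1\) at \(h=5\).
\item \emph{The missing integers are not among \(2,3,5h-3,5h-2\).} A one-step induction on \(h\), translating the previous sumset by each element of the set, gives \(h\{0,2,3,5\}=[0,5h]\setminus\{1,5h-1\}\) and \(h\{0,2,4,5\}=[0,5h]\setminus\{1,3\}\), both for \(h\ge1\). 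By reflection, \(h\{0,1,3,5\}=[0,5h]\setminus\{5h-3,5h-1\}\) for \(h\ge1\). So each of these gives size \(5h-1\).
\end{itemize}

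Add these identities to your correct computation for \(\{0,1,2,5\}\). Then the only sizes at affine diameter five are \(5h-1\) and \(5h+1\), and the theorem follows exactly as in the paper.
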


\begin{corollary}[Rajagopal's conjectured gap]\label{cor:rajagopal}
For every integer \(h\ge4\), one has \(5h\notin\Rcal(h,4)\).
\end{corollary}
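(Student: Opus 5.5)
The corollary is immediate from Theorem~\ref{thm:main}, so the plan is simply to check that \(5h\) lies in the interval \([4h+2,6h-4]\) but is not one of the listed values.

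First I verify membership in the interval for every \(h\ge4\). The lower inequality \(4h+2\le 5h\) is equivalent to \(h\ge2\). The upper inequality \(5h\le 6h-4\) is equivalent to \(h\ge4\). Both hold on the whole range \(h\ge4\). Note that the upper inequality is tight at \(h=4\), where \(5h=20=6h-4\) is the right endpoint; the interval is nonempty there since \(4h+2=18\).

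Next I invoke Theorem~\ref{thm:main}. It says that \(\Rcal(h,4)\cap[4h+2,6h-4]\) is \(\{5h-1\}\) when \(h=4\), and \(\{5h-1,5h+1\}\) when \(h\ge5\). In either case \(5h\) is not in this set, since \(5h\ne5h\pm1\). Because \(5h\) lies in the interval, it follows that \(5h\notin\Rcal(h,4)\).

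There is no genuine obstacle here: all the work is in Theorem~\ref{thm:main}. The only point needing care is the endpoint case \(h=4\), which is handled by the check \(5h\le 6h-4\) above.
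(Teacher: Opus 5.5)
Your proposal is correct and follows the paper's own argument exactly: you check that \(5h\in[4h+2,6h-4]\) for all \(h\ge4\), including the endpoint case \(h=4\), and then note that \(5h\) is not among the values \(5h-1\) and \(5h+1\) listed in Theorem~\ref{thm:main}.
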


\section{Normalization and the diameter reduction}

For a nonempty finite set \(A\subset\Z\), put
\[
 g(A)=\gcd\{a-\min A:a\in A\}
\]
and define its affine diameter by
\[
 \diamaff(A)=\frac{\max A-\min A}{g(A)}.
\]
Equivalently, \(\diamaff(A)+1\) is the minimum number of terms in an
arithmetic progression containing \(A\).
Indeed, the common difference of every arithmetic progression containing
\(A\) divides \(g(A)\), while the progression with common difference
\(g(A)\) and endpoints \(\min A,\max A\) contains \(A\).

\begin{lemma}[Normalization]\label{lem:normalization}
Let \(m=\min A\), \(g=g(A)\), and \(B=(A-m)/g\).  Then
\[
 \min B=0,\qquad \gcd\{b:b\in B\}=1,\qquad
 \max B=\diamaff(A),
\]
and \(|hB|=|hA|\) for every \(h\ge1\).
\end{lemma}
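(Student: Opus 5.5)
The plan is to check the four claims one at a time, using only that the map \(x\mapsto (x-m)/g\) is an affine bijection of \(\Z\)-lines. First, \(B\subset\Z\) holds because \(g\) divides every \(a-m\). Since \(m\) is the least element of \(A\) and \(g>0\), we get \(\min B=(m-m)/g=0\). Likewise \(\max B=(\max A-m)/g\), which is \(\diamaff(A)\) by definition. (If \(|A|=1\), then every \(a-m\) is zero and \(g(A)\) is degenerate. In that case I would adopt the convention \(g=1\), or note that the statement is only used for \(|A|\ge2\).)

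For the gcd condition, I would write \(B=\{(a-m)/g : a\in A\}\). A positive integer \(d\) divides every element of \(B\) exactly when \(dg\) divides every \(a-m\), that is, when \(dg\mid g\). This forces \(d=1\), so \(\gcd B=1\).

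For the sumset sizes, the key identity is \(hB=(hA-hm)/g\). Any sum \(b_1+\cdots+b_h\) with \(b_i=(a_i-m)/g\) equals \((a_1+\cdots+a_h-hm)/g\), and conversely. The map \(x\mapsto (x-hm)/g\) is injective on \(\Z\) because \(g\neq0\). It therefore restricts to a bijection \(hA\to hB\), which gives \(|hB|=|hA|\).

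There is no real obstacle here. The only point needing care is the degenerate singleton case for \(g(A)\), which I would dispose of by convention as above.
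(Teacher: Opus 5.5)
Your proposal is correct and takes essentially the same route as the paper. The first three assertions follow directly from the definitions; your \(dg\mid g\) argument simply spells out the gcd step. The equality \(|hB|=|hA|\) comes from the same bijection \(s\mapsto (s-hm)/g\) from \(hA\) onto \(hB\).

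One small caveat concerns your side remark on singletons. The convention \(g=1\) would give \(B=\{0\}\), whose gcd is \(0\), not \(1\), so it does not rescue the statement. Your second option is the correct way to dispose of that case: restrict to \(|A|\ge2\), which is all the paper ever uses.
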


\begin{proof}
The first three assertions follow directly from the definitions.  The map
\[
 hA\longrightarrow hB,\qquad s\longmapsto\frac{s-hm}{g}
\]
is a bijection, with inverse \(t\mapsto gt+hm\).
\end{proof}

We use the following sharp growth estimate of Lev \cite{Lev}.

\begin{theorem}[Lev]\label{thm:lev}
Suppose that \(B\subset\Z\), \(|B|=k\ge3\),
\(\min B=0\), \(\gcd\{b:b\in B\}=1\), and \(\max B=d\).
For every \(i\ge2\),
\[
 |iB|-|(i-1)B|\ge
 \min\{d,i(k-2)+1\}.
\]
\end{theorem}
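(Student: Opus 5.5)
The statement is Lev's theorem, which the paper quotes from \cite{Lev} rather than proving; here is how I would reconstruct it. The plan is to reduce the growth of \(iB\) to a lower bound for an \(i\)-fold sumset in the cyclic group \(\Z/d\Z\). I would then prove that bound by Kneser's theorem together with an induction on \(i\).

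Write \(B=\{0=b_0<b_1<\cdots<b_{k-1}=d\}\) and sort \(iB\) and \((i-1)B\) into residue classes modulo \(d\). Since \(0,d\in B\), we have \(iB\supseteq (i-1)B\cup\bigl((i-1)B+d\bigr)\).

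\begin{itemize}
\item Take a residue class \(r\) that meets \((i-1)B\), and let \(M_r\) be the largest element of \((i-1)B\) in that class. Then \(M_r+d\in iB\) is strictly larger than every element of \((i-1)B\) in class \(r\), so it is new.
\item Take a class met by \(iB\) but not by \((i-1)B\). Then every element of \(iB\) in it is new.
\end{itemize}

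Hence
\[
 |iB|-|(i-1)B|\ \ge\ |i\bar B|,
\]
where \(\bar B\) is the image of \(B\) in \(\Z/d\Z\). The set \(\bar B\) has exactly \(k-1\) elements, because \(0\) and \(d\) coincide. It contains \(0\), and it generates \(\Z/d\Z\) because \(\gcd B=1\). So it suffices to show that for every \(i\ge1\),
\[
 |i\bar B|\ \ge\ \min\{d,\ i(k-2)+1\}.
\]
This is exactly the Cauchy--Davenport-type count \(i(|\bar B|-1)+1\), but now in \(\Z/d\Z\) with \(d\) possibly composite.

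To prove the cyclic bound, I would induct on \(i\); the case \(i=1\) is trivial. For the step, apply Kneser's theorem to \(i\bar B=(i-1)\bar B+\bar B\). If the stabilizer \(H\) of \(i\bar B\) is trivial, Kneser gives \(|i\bar B|\ge|(i-1)\bar B|+|\bar B|-1\), and the induction hypothesis finishes the step. If \(H\) is a nontrivial proper subgroup, \(i\bar B\) is a union of \(H\)-cosets. The goal is then to use that \(\bar B\) generates \(\Z/d\Z\) and contains \(0\): the chain \(\bar B\subseteq 2\bar B\subseteq\cdots\) must meet strictly more cosets of \(H\) at each stage until it fills \(\Z/d\Z\), and filling gives the bound \(d\).

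The main obstacle is this subgroup case. Kneser only gives \(|i\bar B|\ge |(i-1)\bar B+H|+|\bar B+H|-|H|\), and the losses from the periods must be shown not to accumulate. I would first try to prove the stronger statement that \(|j\bar B|\ge\min\{d,\,j(|\bar B|-1)+1\}\) simultaneously for all \(j\le i\). Passing to the quotient \((\Z/d\Z)/H\), the image of \(\bar B\) still generates and contains \(0\), so the induction applies there. One then recombines, using that each coset met by \(\bar B\) contributes a full coset of \(H\) once \(i\ge2\); this is the same mechanism behind Lev's original argument. If the bookkeeping fails, a fallback is to avoid the quotient altogether. One would instead apply Freiman's \(3k-4\)-type inequality of Lev--Smeliansky, \(|X+Y|\ge\min\{\ell,|X|+|Y|-2\}+|Y|\) for \(X=(i-1)B\), \(Y=B\) with suitable normalization. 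This gives the increment directly in \(\Z\).
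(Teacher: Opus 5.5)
\paragraph{Gap: the cyclic inequality is false.}
Your first step is sound. Because \(0,d\in B\), every residue class modulo \(d\) that meets \(iB\) contains an element of \(iB\setminus(i-1)B\). Hence \(|iB|-|(i-1)B|\ge|i\bar B|\), where \(\bar B\subseteq\Z/d\Z\) has \(k-1\) elements, contains \(0\), and generates.

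The gap is the cyclic inequality you then reduce to. The bound \(|i\bar B|\ge\min\{d,\,i(|\bar B|-1)+1\}\) is false for composite \(d\) once \(|\bar B|\ge4\). Take \(B=\{0,1,6,7,12\}\), so \(k=5\), \(d=12\), \(\gcd B=1\), and take \(i=2\). Then \(\bar B=\{0,1,6,7\}\), and \(2\bar B=\{0,1,2,6,7,8\}\) has \(6\) elements, while \(\min\{12,2\cdot3+1\}=7\).

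Lev's inequality does hold here: \(2B=\{0,1,2,6,7,8,12,13,14,18,19,24\}\), so \(|2B|-|B|=7\). The missing element is lost in your reduction itself. The class of \(2\) contains two new elements, \(2\) and \(14\), and you credit it with only one. So no Kneser bookkeeping in \(\Z/d\Z\) or its quotients can close the subgroup case, because any such argument only bounds \(|i\bar B|\), which is \(6<7\) here.

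The ``stronger statement'' you propose to prove by induction is the same false inequality. In this example \(2\bar B\) has stabilizer \(H=\{0,6\}\) and \(\bar B=H\cup(1+H)\). Kneser's gain \(|\bar B+H|-|H|=|H|\) is then \(|\bar B|/2\), not \(|\bar B|-1\).

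Your fallback does not rescue the argument either. Lev--Smeliansky needs \(\ell=\max X\ge\max Y\) and \(\gcd X=1\), so you must take \(X=(i-1)B\), \(Y=B\). It then gives only \(|iB|-|(i-1)B|\ge\min\{(i-1)d-|(i-1)B|,\,k-2\}+k\). The right side never exceeds \(2k-2\), independently of \(i\). This falls short of \(\min\{d,i(k-2)+1\}\) whenever \(i\ge3\), \(k\ge4\) and \(d\ge2k-1\).

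\paragraph{What your route does prove.}
It proves the cases \(k\le4\), and \(k=4\) is the only case the paper uses, in Lemma~\ref{lem:diameter}.

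For \(k=3\), \(\bar B=\{0,b\}\) with \(b\) a generator, so \(|i\bar B|=\min\{d,i+1\}\).

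For \(k=4\), let \(H\) be the stabilizer of \(i\bar B=(i-1)\bar B+\bar B\), and suppose \(i\bar B\ne\Z/d\Z\).
\begin{itemize}
\item If \(H=\{0\}\), Kneser gives \(|i\bar B|\ge|(i-1)\bar B|+|\bar B|-1=|(i-1)\bar B|+2\).
\item Otherwise \(H\) is a nontrivial proper subgroup, so \(\bar B\not\subseteq H\). Since \(0\in\bar B\), this gives \(|\bar B+H|\ge2|H|\). Kneser then gives \(|i\bar B|\ge|(i-1)\bar B|+|H|\ge|(i-1)\bar B|+2\).
\end{itemize}
Induction then yields \(|i\bar B|\ge\min\{d,2i+1\}\). (For prime \(d\), Cauchy--Davenport handles every \(k\).)

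For general \(k\) and composite \(d\), the proposal is missing an argument that extracts the extra new elements inside residue classes in the periodic case. That is where the real content of Lev's theorem lies.
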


\begin{lemma}[Diameter reduction]\label{lem:diameter}
Let \(A\subset\Z\), \(|A|=4\), and \(h\ge4\).  If
\[
 4h+2\le |hA|\le6h-4,
\]
then \(\diamaff(A)=5\).
\end{lemma}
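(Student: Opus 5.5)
By Lemma~\ref{lem:normalization} we may replace \(A\) by its normalization \(B\). Then \(|B|=4\), \(\min B=0\), \(\gcd B=1\), \(\max B=d:=\diamaff(A)\), and \(|hB|=|hA|\). The plan is to rule out \(d\le4\) by a direct upper bound, and \(d\ge6\) by summing Lev's inequality (Theorem~\ref{thm:lev}) with \(k=4\), where \(i(k-2)+1=2i+1\).

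\emph{Small diameter.} Since \(|B|=4\) we have \(d\ge3\), and \(hB\subseteq[0,hd]\). If \(d=3\), then \(B=\{0,1,2,3\}\) and \(|hB|=3h+1<4h+2\). If \(d=4\), then \(|hB|\le 4h+1<4h+2\). Both cases contradict the hypothesis.

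\emph{Large diameter.} Suppose \(d\ge6\). Telescoping from \(|B|=4\) gives
\[
 |hB|\ge 4+\sum_{i=2}^{h}\min\{d,2i+1\}.
\]
Since \(d\ge6\), the summand equals \(5\) at \(i=2\) and is at least \(6\) for every \(i\ge3\). Hence
\[
 |hB|\ge 4+5+6(h-2)=6h-3>6h-4,
\]
again a contradiction. The only remaining value is \(d=5\), which proves the lemma.

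The computation is routine. The one point needing care is that Lev's theorem applies only after normalization: the hypotheses \(\gcd=1\) and \(\min B=0\) must hold, which is why the statement is phrased in terms of \(\diamaff\) rather than \(\max A-\min A\). I also need to confirm that the \(i=2\) term is exactly \(\min\{d,5\}=5\) for \(d\ge6\), so that the bound reaches \(6h-3\) and not something weaker. Note that this argument does not use \(h\ge4\) beyond \(h\ge2\).
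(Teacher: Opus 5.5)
Your proof is correct and follows essentially the same route as the paper's. You normalize, use \(hB\subseteq[0,hd]\) to get \(|hB|\le 4h+1\) when \(d\le4\), and for \(d\ge6\) telescope Lev's increments (exactly \(5\) at \(i=2\), at least \(6\) for each \(i\ge3\)) to reach \(|hB|\ge 6h-3\).
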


\begin{proof}
Normalize \(A\) by Lemma~\ref{lem:normalization} and write
\[
 B=\{0,a,b,d\},\qquad 0<a<b<d.
\]
If \(d\le4\), then \(hB\subset[0,hd]\), so
\[
 |hA|=|hB|\le hd+1\le4h+1,
\]
contrary to the lower bound on \(|hA|\).

Suppose instead that \(d\ge6\).  Theorem~\ref{thm:lev} with \(k=4\)
gives
\[
 |2B|\ge |B|+\min\{d,5\}\ge9.
\]
For each \(i\ge3\), it also gives
\[
 |iB|-|(i-1)B|\ge\min\{d,2i+1\}\ge6.
\]
Therefore
\[
 |hA|=|hB|\ge9+6(h-2)=6h-3,
\]
contrary to the upper bound on \(|hA|\).
The only remaining integer value of \(d\) is \(5\).
\end{proof}

\section{The affine-diameter-five sets}

In this section, every interval is an interval of integers.
A normalized four-element set of affine diameter five has the form
\(\{0,a,b,5\}\), with
\(\{a,b\}\in\binom{\{1,2,3,4\}}2\).  Thus there are exactly six:
\[
\begin{split}
&\{0,1,2,5\},\quad \{0,1,3,5\},\quad \{0,1,4,5\},\\
&\{0,2,3,5\},\quad \{0,2,4,5\},\quad \{0,3,4,5\}.
\end{split}
\]
All six have gcd one, since their gcd divides \(5\) but each contains an
interior element not divisible by \(5\).

\begin{lemma}[Reflection]\label{lem:reflection}
For any finite \(A\subset\Z\) and any \(c\in\Z\),
\[
 h(c-A)=hc-hA.
\]
In particular, \(|h(c-A)|=|hA|\).
\end{lemma}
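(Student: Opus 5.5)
The identity \(h(c-A)=hc-hA\) is a statement about sets, so I will prove it by double inclusion, using the definition of the \(h\)-fold sumset directly. The map \(x\mapsto c-x\) is a bijection of \(\Z\) carrying \(A\) onto \(c-A\), and a sum of \(h\) reflected elements equals \(hc\) minus the sum of the original elements.

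\emph{First inclusion.} Let \(s\in h(c-A)\). Then there are \(a_1,\ldots,a_h\in A\) with
\[
 s=(c-a_1)+\cdots+(c-a_h)=hc-(a_1+\cdots+a_h).
\]
Since \(a_1+\cdots+a_h\in hA\), this gives \(s\in hc-hA\).

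\emph{Second inclusion.} Let \(t\in hc-hA\). Then \(t=hc-(a_1+\cdots+a_h)\) for some \(a_1,\ldots,a_h\in A\). Regrouping,
\[
 t=\sum_{j=1}^h (c-a_j)\in h(c-A).
\]
The two inclusions give the equality of sets.

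For the cardinality claim, the map \(x\mapsto hc-x\) is an involution of \(\Z\), hence injective. It sends \(hA\) onto \(hc-hA=h(c-A)\), so \(|h(c-A)|=|hA|\).

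I do not expect any real obstacle. The only point needing care is that \(hc-hA\) means the set \(\{hc-s:s\in hA\}\), and not \(hc\) minus a difference set. Once this notation is fixed, the argument is just the regrouping shown above.

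In the paper, this lemma will be applied to normalized sets with \(c=5\). It pairs \(\{0,1,2,5\}\) with \(\{0,3,4,5\}\), and \(\{0,1,3,5\}\) with \(\{0,2,4,5\}\), while \(\{0,1,4,5\}\) and \(\{0,2,3,5\}\) are fixed by the reflection. This leaves four exact computations of \(|hA|\), in line with the abstract.
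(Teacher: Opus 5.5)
Your proof is correct and follows the paper's own argument: both rest on the regrouping \((c-a_1)+\cdots+(c-a_h)=hc-(a_1+\cdots+a_h)\) and on the fact that \(x\mapsto hc-x\) is a bijection. The only difference is that you write out the two set inclusions explicitly, where the paper leaves them implicit.
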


\begin{proof}
The identity follows by writing
\[
 (c-a_1)+\cdots+(c-a_h)=hc-(a_1+\cdots+a_h).
\]
The map \(x\mapsto hc-x\) is a bijection.
\end{proof}

Reflection about \(5\) pairs
\[
\{0,1,2,5\}\longleftrightarrow\{0,3,4,5\},\qquad
\{0,1,3,5\}\longleftrightarrow\{0,2,4,5\},
\]
and fixes \(\{0,1,4,5\}\) and \(\{0,2,3,5\}\).
It therefore suffices to calculate four multiple sumsets.

\begin{lemma}\label{lem:four}
For the four reflection representatives, the following identities hold:
\begin{align}
h\{0,1,2,5\}
  &= [0,5h-3]\cup\{5h\} &&(h\ge1), \label{eq:a}\\
h\{0,1,4,5\}
  &= [0,5h] &&(h\ge3), \label{eq:b}\\
h\{0,2,3,5\}
  &= [0,5h]\setminus\{1,5h-1\} &&(h\ge1), \label{eq:c}\\
h\{0,2,4,5\}
  &= [0,5h]\setminus\{1,3\} &&(h\ge1). \label{eq:d}
\end{align}
\end{lemma}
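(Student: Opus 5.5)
Each identity will be proved by induction on \(h\), using \(hS=(h-1)S+S\). In every case the right-hand side is a subset of \([0,5h]\). Verifying an identity then has two parts: show that each listed element is attained, and show that each omitted element is not. The omitted elements are few and sit near the ends of the interval, so non-attainment can be checked by hand.

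For the exclusions I will argue directly from the shape of a sum of \(h\) elements.
\begin{itemize}
\item In \eqref{eq:c} and \eqref{eq:d}, the smallest nonzero element is \(2\), so \(1\) is never a sum. In \eqref{eq:d}, a sum equal to \(3\) would need parts from \(\{2,4,5\}\) summing to \(3\), which is impossible.
\item In \eqref{eq:c}, the set is symmetric under \(x\mapsto5-x\). Lemma~\ref{lem:reflection} then gives \(h\{0,2,3,5\}=5h-h\{0,2,3,5\}\), so \(5h-1\) is excluded together with \(1\).
\item In \eqref{eq:a}, write a sum as \(5h-\sum(5-a_i)\). The deficits \(5-a_i\) lie in \(\{5,4,3,0\}\). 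A nonzero total deficit is therefore at least \(3\), which excludes \(5h-1\) and \(5h-2\).
\end{itemize}

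For the inclusions I will induct on \(h\). Suppose the claimed form holds for \((h-1)S\). Adding \(0\) keeps it, and adding \(5\) shifts it up by \(5\). Taking the union of the two, together with translates by the interior elements, should fill the new interval except at the designated gaps.
\begin{itemize}
\item In \eqref{eq:a}, \([0,5h-8]\cup\{5h-5\}\) shifted by \(5\) gives \([5,5h-3]\cup\{5h\}\). The unshifted copy covers \([0,5h-8]\), and \([0,5h-8]\cup[5,5h-3]=[0,5h-3]\) once \(5h-8\ge4\). For small \(h\) I will check directly. The base case is \(h=1\), and \(h=2\) should be checked by hand as well.
\item In \eqref{eq:d}, \(([0,5h-5]\setminus\{1,3\})+5\) covers \([5,5h]\setminus\{6,8\}\). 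Adding \(0\) covers \(\{0,2,4\}\), and adding \(4\) to \(2\) and \(4\) supplies \(6\) and \(8\) (for \(h\ge2\)).
\item In \eqref{eq:c}, the shifted copy covers \([5,5h]\setminus\{6,5h-1\}\) together with the small values. The value \(6=3+3\) comes from adding \(3\), and \(4=2+2\) from adding \(2\).
\item In \eqref{eq:b}, I will compute the base case \(3\{0,1,4,5\}=[0,15]\) directly. Examples: \(2=1+1\), \(3=1+1+1\), \(6=5+1\), \(7=5+1+1\), \(11=5+5+1\), and \(12=4+4+4\). Since \([0,5(h-1)]\cup[5,5h]=[0,5h]\), the induction is then immediate.
\end{itemize}

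The only real obstacle is bookkeeping at the base cases and with small elements, especially \eqref{eq:b} at \(h=3\), where \(2\{0,1,4,5\}\) misses \(7\). That is why that identity starts at \(h=3\). I will write out \(2S\) or \(3S\) explicitly in each case before starting the induction.
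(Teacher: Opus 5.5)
Your proof is correct, but it is organized differently from the paper's.

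For \eqref{eq:a}, \eqref{eq:c}, \eqref{eq:d}, the paper also inducts via \(hS=(h-1)S+S\). However, it computes the full union of the four translates \((h-1)S+s\), \(s\in S\), exactly. Since that union is \(hS\), inclusion and exclusion come out together and no separate non-attainment argument is needed. You use the recursion only for the lower bound: mainly the translates by \(0\) and \(5\), plus small patches such as \(2+4\), \(4+4\), \(3+3\). You then prove the gaps directly:
\begin{itemize}
\item the smallest positive element is \(2\);
\item \(\{0,2,3,5\}=5-\{0,2,3,5\}\), combined with Lemma~\ref{lem:reflection};
\item the deficit count \(5-a_i\in\{0,3,4,5\}\).
\end{itemize}
Your split explains why the gaps occur and makes the bookkeeping more forgiving, since you never check that no translate lands in a gap. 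The cost is two arguments per identity instead of one union computation.

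For \eqref{eq:b}, the paper avoids any base-case computation by factoring \(\{0,1,4,5\}=\{0,1\}+\{0,4\}\). This gives the closed form \(\bigcup_{j=0}^{h}[4j,4j+h]\) for every \(h\). It makes the threshold \(h\ge3\) transparent, since consecutive blocks meet or are adjacent exactly when \(h\ge3\). It also shows that \(2\{0,1,4,5\}\) misses both \(3\) and \(7\), not only \(7\). Your base case \(3\{0,1,4,5\}=[0,15]\) followed by shift-by-\(5\) induction is valid, just more ad hoc.
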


\begin{proof}
For \eqref{eq:a}, induction starts at \(h=1\).  If the formula holds
at \(h\), the translates by \(0,1,2,5\) of the interval
\([0,5h-3]\), together with the translates of \(5h\), have union
\[
 [0,5h+2]\cup\{5h+5\},
\]
which is the formula at \(h+1\).

For \eqref{eq:b}, observe that
\[
 \{0,1,4,5\}=\{0,1\}+\{0,4\}.
\]
Consequently
\[
 h\{0,1,4,5\}
 = [0,h]+4[0,h]
 = \bigcup_{j=0}^h[4j,4j+h].
\]
When \(h\ge3\), consecutive intervals meet or are adjacent, and their
union is \([0,5h]\).

For \eqref{eq:c}, use induction from \(h=1\).  Writing the induction
hypothesis as
\[
 \{0\}\cup[2,5h-2]\cup\{5h\},
\]
the union of its translates by \(0,2,3,5\) is
\[
 \{0\}\cup[2,5h+3]\cup\{5h+5\}
 =[0,5(h+1)]\setminus\{1,5(h+1)-1\}.
\]

Finally, for \eqref{eq:d}, the induction hypothesis can be written as
\[
 \{0,2\}\cup[4,5h].
\]
The union of its translates by \(0,2,4,5\) is exactly
\[
 \{0,2\}\cup[4,5h+5],
\]
which proves the formula at \(h+1\).
\end{proof}

\begin{proof}[Proof of Theorem~\ref{thm:main}]
Suppose that \(|A|=4\), \(h\ge4\), and
\(4h+2\le|hA|\le6h-4\).  Lemma~\ref{lem:diameter} forces affine
diameter five.  After normalization, the enumeration above and
Lemma~\ref{lem:reflection} reduce \(A\) to one of the four sets in
Lemma~\ref{lem:four}.  Their \(h\)-fold sumsets have cardinalities
\[
 5h-1,\qquad 5h+1,\qquad 5h-1,\qquad 5h-1.
\]
Thus these are the only candidates.  The two values \(5h-1\) and \(5h+1\)
are attained by \(\{0,1,2,5\}\) and \(\{0,1,4,5\}\), respectively.  When \(h=4\),
\(5h+1>6h-4\), whereas for \(h\ge5\) both values lie in
\([4h+2,6h-4]\).  This proves the stated classification.
\end{proof}

\begin{proof}[Proof of Corollary~\ref{cor:rajagopal}]
The number \(5h\) belongs to \([4h+2,6h-4]\) for every \(h\ge4\), and
it is not among the values listed in Theorem~\ref{thm:main}.
\end{proof}

\section*{Computational check}

A short independent Python program, \texttt{verify\_rajagopal.py},
accompanying this note enumerates normalized sets \(\{0,a,b,d\}\),
constructs their iterated sumsets, checks the four identities in
Lemma~\ref{lem:four} and the reflection identity, and verifies the full
classification in Theorem~\ref{thm:main} over any requested finite range.
The computation is not used in the proof.

For example, the command
\[
\texttt{python3 verify\_rajagopal.py --max-d 50 --max-h 30}
\]
checks the local classification for 449,496 normalized
set--multiplicity pairs and obtains exact agreement for every
\(4\le h\le30\).  It also checks 482,792 instances of the specialized
Lev increment bound.  These finite checks are included only as an audit of
the implementation and of the case analysis.

\section*{Acknowledgements}

The proof in this note was obtained with substantial assistance from an AI
system.  I subsequently verified every step independently, both by hand and
through the accompanying finite exhaustive computational checks over the
stated ranges, and take full responsibility for the contents.  I am grateful
to I.~Rajagopal for helpful comments and for sharing his perspective on the
geometry of possible sumset sizes.

\end{document}